\newtheorem{prop}[equation]{Proposition}
\newtheorem{lemma}[equation]{Lemma}
\theoremstyle{definition}
\newtheorem{rem}[equation]{Remark}
\newtheorem{example}[equation]{Example}
\newcommand{\CH}{\mathop{\mathrm{CH}}\nolimits}
\newcommand{\pr}{\operatorname{\mathit{pr}}}
\newcommand{\inc}{\operatorname{\mathit{in}}}
\newcommand{\id}{\mathrm{id}}
\newcommand{\Spec}{\operatorname{Spec}}
\newcommand{\Prod}{\operatornamewithlimits{\textstyle\prod}}
\newcommand{\compose}{\circ}
\renewcommand{\phi}{\varphi}
\newcommand{\WR}{\mathcal{R}}
\newcommand{\WZ}{\mathcal{Z}}
\DeclareMathAlphabet{\cat}{OT1}{cmss}{m}{sl}
\title
[Pullback and Weil transfer]
{Pullback and Weil transfer \\
on Chow groups}
\keywords
{Algebraic cycles, Chow groups, Weil transfer.
{\em Mathematical Subject Classification (2020):}
14C25}
\author
{Nikita Karpenko}
\address
{Mathematical \& Statistical Sciences \\
University of Alberta \\
Edmonton
\\
CANADA}
\email
{karpenko@ualberta.ca}
\urladdr{www.ualberta.ca/~karpenko}
\author
{Guangzhao Zhu}
\address
{Mathematical \& Statistical Sciences \\
University of Alberta \\
Edmonton
\\
CANADA}
\email
{guangzha@ualberta.ca}
\date
{7 Apr 2025}
\thanks
{This work has been done during
the second named author's stay at the
Institut des Hautes Etudes Scientifiques.}
\begin{document}

\begin{abstract}
In the paper ``Weil transfer of algebraic cycles'', published by the second author in Indagationes Mathematicae about 25 years ago,
a {\em Weil transfer map} for Chow groups of smooth algebraic varieties has been constructed and its basic properties have been established.
The proof of commutativity with the pullback  homomorphisms given there used a variant of Moving Lemma suffering a lack of reference.
Here we are providing an alternative proof based on a more contemporary construction of the pullback via a deformation to the normal cone.
\end{abstract}

\maketitle



Let $F$ be a field.
By {\em $F$-variety}, we mean a quasi-projective $F$-scheme.

Let $L/F$ be a finite separable field extension and let $X$ be an $L$-variety.
We write $R(X)=R_{L/F}(X)$ for the $F$-variety given by the {\em Weil transfer}
(also called {\em Weil restriction})
of $X$ with respect to $L/F$,
see \cite[\S7.6]{MR1045822} or \cite[\S4]{MR1321819}.
In \cite{MR1809664}, a (non-additive) map of Chow groups
$$
\cat{R}\colon\CH(X)\to\CH(R(X))
$$
for smooth $X$ has been constructed,
called the {\em Weil transfer map}.
It satisfies the following property:
for any closed subvariety $Z\subset X$, the image under $\cat{R}$ of the class of $Z$ is the class of the closed subvariety
$R(Z)\subset R(X)$.

The map $\cat{R}$ is induced by the map
$$
\WR\colon\WZ(X)\to\WZ(R(X))
$$
of the groups of cycles defined as follows.

Let $E/F$ be a normal closure of the field extension $L/F$.
For any $F$-embedding
$$
\tau\colon L\hookrightarrow E,
$$
we define an $E$-variety $X_\tau$ as the base change of $X$ with respect to $\tau$:
$$
\begin{CD}
X_\tau @>>> X\\
@VVV @VVV\\
\Spec E @>{\tau}>> \Spec L
\end{CD}
$$
The canonical morphism of $L$-varieties $R(X)_L\to X$ induces
an isomorphism of the $E$-variety $R(X)_E$ with the product
$\Prod_{\tau} X_\tau$.
For any cycle $\alpha\in\WZ(X)$ and any $\tau$ as above, we write $\alpha_\tau$ for the pullback of $\alpha$ to $X_\tau$ via the morphism
$X_\tau\to X$.
We define $\WR(\alpha)$ as the cycle on $R(X)$ mapped
to the external product $\Prod_\tau\alpha_\tau$
under the base change homomorphism
$$
\WZ(R(X))\to \WZ(R(X)_E)=\WZ(\Prod_{\tau} X_\tau).
$$
The cycle $\WR(\alpha)$ exists and is uniquely determined by the above condition because
the base change homomorphism $\WZ(R(X))\to \WZ(R(X)_E)$ identifies $\WZ(R(X))$ with the group
$\WZ(R(X)_E)^G$ of $G$-invariant elements in $\WZ(R(X)_E)$, where $G$ is the Galois group of $E/F$.

Most of the properties of the map of Chow groups $\cat{R}$,
established in \cite{MR1809664}, are easy to verify because they hold ``on the level of cycles''.
For instance,

\begin{example}
\label{example}
For any two smooth $L$-varieties $X,Y$ and a {\em flat} morphism of schemes
$$
f\colon Y\to X,
$$
the morphism $R(f)\colon R(Y)\to R(X)$ is also flat, and
the square on the left
$$
\begin{CD}
\CH(Y) @<{f^*}<< \CH(X)\\
@V\cat{R}VV @VV\cat{R}V\\
\CH(R(Y)) @<R(f)^*<< \CH(R(X))
\end{CD}
\hspace{7em}
\begin{CD}
\WZ(Y) @<{f^*}<< \WZ(X)\\
@V{\WR}VV @VV{\WR}V\\
\WZ(R(Y)) @<R(f)^*<< \WZ(R(X))
\end{CD}
$$
commutes because by \cite[Proposition 3.5(flat pull-back)]{MR1809664} so does the square on the right.
\end{example}

The commutation with the general pullback homomorphism however
is more delicate because the latter is not defined on the level of cycles:

\begin{prop}[{\cite[Proposition 4.4(pull-back)]{MR1809664}}]
\label{prop}
For any morphism $f\colon Y\to X$ of smooth $L$-varieties $Y$ and $X$, the square
$$
\begin{CD}
\CH(Y) @<{f^*}<< \CH(X)\\
@V\cat{R}VV @VV\cat{R}V\\
\CH(R(Y)) @<R(f)^*<< \CH(R(X))
\end{CD}
$$
commutes.
\end{prop}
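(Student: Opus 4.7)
My plan is to reduce the proposition to the case of a Gysin pullback along a regular closed immersion and then establish that case by deformation to the normal cone. The reduction is standard: factor $f = p\circ\gamma_f$, where $\gamma_f\colon Y\hookrightarrow Y\times_L X$ is the graph embedding (a regular closed immersion since $Y$ is smooth) and $p\colon Y\times_L X\to X$ is the flat projection. Under the canonical isomorphism $R(Y\times_L X) = R(Y)\times_F R(X)$, the morphisms $R(\gamma_f)$ and $R(p)$ are respectively the graph of $R(f)$ and the analogous flat projection. The flat factor $p^*$ is then handled by the Example preceding the proposition, so it remains to show that $\cat R$ commutes with $i^*$ for a regular closed immersion $i\colon Y\hookrightarrow X$ of smooth $L$-varieties.

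For this remaining case I would use the Fulton--MacPherson deformation space $\pi\colon M = M_{Y/X}\to\A^1_L$: a smooth $L$-variety, flat over $\A^1_L$, with fibre $X$ at $1$ and fibre equal to the normal bundle $N = N_{Y/X}$ at $0$, together with a closed subvariety $Y\times\A^1_L\hookrightarrow M$ interpolating $i$ and the zero section. The pullback $i^*$ is the composition of the specialization $\sigma\colon\CH(X)\to\CH(N)$ with the inverse of the vector-bundle pullback $\pi_N^*\colon\CH(Y)\to\CH(N)$; on cycles $\sigma$ takes the class of a subvariety $Z\subset X$ to the class of $\overline{Z\times\Gm}\cap N$ in $N$, the closure being taken in $M$. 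Since Weil transfer commutes with fibre products of $L$-schemes and preserves smoothness, $R(M)\to R(\A^1_L) = \A^d_F$ (with $d = [L:F]$) is smooth; pulling back along the canonical $F$-embedding $\A^1_F\hookrightarrow\A^d_F$ coming from $F\hookrightarrow L$ yields a smooth $\A^1_F$-family with fibres $R(X)$ over $1$ and $R(N)$ over $0$, containing $R(Y)\times_F\A^1_F$ as a closed subvariety interpolating $R(i)$ and the zero section of $R(N)$. Together with the identification $R(N_{Y/X}) = N_{R(Y)/R(X)}$, verified after base change to the normal closure $E$ of $L/F$ where both sides become the external product $\Prod_\tau N_{Y_\tau/X_\tau}$, this family carries exactly the data needed to compute $R(i)^*$ by the same specialization recipe. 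The equalities $R(\overline{Z\times\Gm}) = \overline{R(Z)\times\Gm}$ and the matching of intersection multiplicities with the special fibre reduce, via the defining external-product formula for $\WR$ after base change to $E$, to elementary statements about $\Prod_\tau$; combined with the flat-pullback compatibility of $\cat R$ for $\pi_N^*$ (again by the Example), this yields $\cat R\circ i^* = R(i)^*\circ\cat R$.

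The principal obstacle is that Weil transfer does not in general commute with blowups, so $R(M_{Y/X})$ is not literally the deformation space $M_{R(Y)/R(X)}$, and one cannot simply invoke functoriality of the Fulton--MacPherson construction. The way around it is to exploit that specialization can be computed from any smooth $\A^1$-family with the correct generic and special fibres and a suitable closed subvariety interpolating $i$ and the zero section, all of which are preserved under $\cat R$. The two essential cycle-level compatibilities left to check — commutation of $\cat R$ with the scheme-theoretic closure of a cycle in $M$ and with pullback to the special fibre — become transparent after base change to $E/F$, where $\WR$ is nothing but the external-product construction indexed by the $F$-embeddings $\tau\colon L\hookrightarrow E$.
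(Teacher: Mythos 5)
Your overall architecture coincides with the paper's: factor $f$ through the graph embedding and the flat projection, dispose of the flat factor via the Example, identify $R(N_{Y/X})$ with $N_{R(Y)/R(X)}$ after base change to $E$, and check compatibility of $\cat{R}$ with the deformation-to-the-normal-cone step after base change to $E$, where $\WR$ becomes the external product over the embeddings $\tau\colon L\hookrightarrow E$. The difference lies in how that last step is justified, and this is where your argument has a genuine gap. You correctly observe that $R(M_{Y/X})$ restricted to $\A^1_F\subset\A^d_F$ is not the deformation space $M_{R(Y)/R(X)}$, and you propose to get around this by asserting that specialization ``can be computed from any smooth $\A^1$-family with the correct generic and special fibres and a suitable closed subvariety interpolating $i$ and the zero section.'' That assertion is not a standard theorem and, in the stated generality, is false: the specialization homomorphism attached to a family $M\to\A^1$ with $M|_{\Gm}\cong X\times\Gm$ and $M_0\cong N$ depends on the family itself --- already the trivialization over $\Gm$ can be twisted by a $t$-dependent automorphism of $X$, which changes the flat limits --- and not only on its fibres together with the interpolating subvariety. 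So this sentence cannot carry the weight you place on it.

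What actually closes the gap in your setup is the specific identification, after base change to $E$, of your family (the diagonal restriction of $\Prod_\tau M_{Y_\tau/X_\tau}$ over $\A^1_E\subset\A^d_E$) with the deformation space of the product embedding $\Prod_\tau Y_\tau\hookrightarrow\Prod_\tau X_\tau$ --- equivalently, the compatibility of the deformation space and of the Gysin map with external products of regular closed embeddings. That is a true but nontrivial statement which must be invoked or proved, and it is precisely the content your independence-of-the-family shortcut tries to skip. The paper avoids the issue by working with Rost's version of the deformation homomorphism $\sigma_f$ from [EKM, \S 51], which is defined on the level of cycles and whose compatibility with the base changes $\tau$ is literally [EKM, Proposition 51.5]; the product compatibility of normal cones needed for $R(N_f)=N_{R(f)}$ is [EKM, Proposition 104.7]. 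If you replace your claim by the external-product compatibility of the Fulton--MacPherson construction (or switch to the cycle-level deformation homomorphism), your proof goes through and is essentially the paper's.
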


To prove Proposition \ref{prop}, a variant of Moving Lemma suffering a lack of reference
(see \cite[Appendix A]{MR3617981})
has been used in \cite{MR1809664}.\footnote{We thank Stefan Gille for pointing this out.}
Here we are providing an alternative proof based on the ``modern'' definition of the pullback via the deformation to the normal cone
homomorphism.
More precisely, we will use the modified approach developed by Markus Rost in \cite{MR1418952} with its detailed exposition given
in \cite{EKM}, which is simpler than the original approach of \cite{fulton}.

First of all,
the homomorphism $f^*$ is defined (see \cite[(55.15)]{EKM}) as the composition $\inc^*\compose\pr^*$,
where $$\pr\colon Y\times X\to X$$ is the projection and
$$
\inc:=(\id_Y,f)\colon Y\to Y\times X.
$$
Taking into account the identification $R(Y\times X)=R(Y)\times R(X)$, the morphism $R(\pr)$ is the projection
$R(Y)\times R(X)\to R(X)$ whereas $R(\inc)=(\id_{R(Y)},R(f))$.

The morphisms $\pr$ and $R(\pr)$ are flat so that the pullbacks $\pr^*$ and $R(\pr)^*$ are defined on the level of cycles;
the squares
$$
\begin{CD}
\WZ(Y\times X) @<{\pr^*}<< \WZ(X)\\
@V{\WR}VV @VV{\WR}V\\
\WZ(R(Y\times X)) @<R(\pr)^*<< \WZ(R(X))
\end{CD}
\hspace{3em}
\text{ and }
\hspace{3em}
\begin{CD}
\CH(Y\times X) @<{\pr^*}<< \CH(X)\\
@V\cat{R}VV @VV\cat{R}V\\
\CH(R(Y\times X)) @<R(\pr)^*<< \CH(R(X))
\end{CD}
$$
commute by Example \ref{example}.

The morphism $\inc$ is a regular closed embedding.
In fact, any closed embedding of smooth varieties is regular (see \cite[Proposition 104.16]{EKM}).
Since the Weil transfer functor preserves smoothness and closed embeddings, we reduced the proof
of Proposition \ref{prop} to the case where $f$ is a closed embedding.

Once we assume $f$ is a closed embedding,
the pullback homomorphism $f^*$ is the Gysin homomorphism
defined (see \cite[55.A]{EKM}) as the composition $(p_f^*)^{-1}\compose\sigma_f$,
where
$$
\sigma_f\colon\CH(X)\to\CH(N_f)
$$
is the deformation homomorphism and
$p_f\colon N_f\to Y$ is the vector bundle over $Y$ given by the normal cone of $f$.
By Homotopy Invariance of Chow groups \cite[Theorem 52.13]{EKM},
the flat pullback $p_f^*\colon\CH(Y)\to\CH(N_f)$
is an isomorphism.

We claim that the normal bundle $N_{R(f)}$
of the closed embedding $R(f)\colon R(Y)\to R(X)$
is given by the Weil transfer of $N_f$:
\begin{equation}
\label{NRf}
N_{R(f)}=R(N_f).
\end{equation}
To see it,
note that
by \cite[(4.2.3)]{MR1321819},
$R(X)_L$ can be obtained as the Weil transfer of $X_K$ with respect to
the \'etale $L$-algebra $K:=L\otimes_F L$.
This $L$-algebra splits off $L$ as a direct factor:
$K=L\times K'$ for certain \'etale $L$-algebra $K'$.
Thus, by \cite[(4.2.6)]{MR1321819}, $R(X)_L=X\times R'(X)$, where $R'(X)$ is the Weil transfer of $X_{K'}$.
Note that the canonical morphism $R(X)_L\to X$ is given by the projection $X\times R'(X)\to X$.
Recall that the induced morphism of $E$-varieties $R(X)_E\to\Prod_\tau X_\tau$, where $\tau$ runs
over the $F$-embeddings $L\hookrightarrow E$, is an isomorphism.

It follows that the closed embedding $R(f)_L\colon R(Y)_L\to R(X)_L$ is the direct product
$$f\times R'(f)\colon Y\times R'(Y)\to X\times R'(X)$$
of the closed embeddings
$Y\to X$ and $R'(Y)\to R'(X)$, and so
$$
(N_{R(f)})_L=N_{R(f)_L}=N_f\times N_{R'(f)}
$$
by naturality of the normal cone \cite[Proposition 104.23]{EKM} and its commutation with direct products
 \cite[Proposition 104.7]{EKM}.
The first projection $(N_{R(f)})_L\to N_f$ induces an isomorphism
$(N_{R(f)})_E\to\Prod_\tau (N_f)_\tau$ proving claim (\ref{NRf}),
cf.\! \cite[\S2.8]{MR0181643}.

Recall that the morphism $$p_f\colon N_f\to Y$$ is flat and the pullback $$p_f^*\colon\CH(Y)\to\CH(N_f)$$ is an isomorphism.
Similarly, the morphism $$R(p_f)\colon R(N_f)\to R(Y)$$ is flat and the pullback $$R(p_f)^*\colon\CH(R(Y))\to\CH(R(N_f))$$ is an isomorphism.
We already know (due to the fact that $p_f^*$ and $R(p_f)^*$ are defined on the level of cycles) that the square
$$
\begin{CD}
\CH(Y) @>{p_f^*}>> \CH(N_f)\\
@V\cat{R}VV @VV\cat{R}V\\
\CH(R(Y)) @>R(p_f)^*>> \CH(R(N_f))
\end{CD}
$$
commutes.
It follows that the square
$$
\begin{CD}
\CH(Y) @<{(p_f^*)^{-1}}<< \CH(N_f)\\
@V\cat{R}VV @VV\cat{R}V\\
\CH(R(Y)) @<{(R(p_f)^*)^{-1}}<< \CH(R(N_f))
\end{CD}
$$
with the inverses of
$p_f^*$ and $R(p_f)^*$ (not defined on the level of cycles anymore) commutes as well.

As per \cite[\S51]{EKM},
the deformation homomorphism $\sigma_f$ is also defined on the level of cycles.
Because of that, our last remaining step in the proof of Proposition \ref{prop} is not difficult to perform:

\begin{lemma}
The squares
$$
\begin{CD}
\CH(N_f) @<{\sigma_f}<< \CH(X)\\
@V\cat{R}VV @VV\cat{R}V\\
\CH(R(N_f)) @<{\sigma_{R(f)}}<< \CH(R(X))
\end{CD}
\hspace{3em}
\text{ and }
\hspace{3em}
\begin{CD}
\WZ(N_f) @<{\sigma_f}<< \WZ(X)\\
@V{\WR}VV @VV{\WR}V\\
\WZ(R(N_f)) @<{\sigma_{R(f)}}<< \WZ(R(X))
\end{CD}
$$
commute.
\end{lemma}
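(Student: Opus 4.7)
The strategy is to verify the cycle-level square (on the right) directly; the Chow-level square (on the left) then follows immediately, since $\cat{R}$ is induced by $\WR$ and the deformation homomorphism on Chow groups descends from its cycle-level version. By construction of $\WR$, the base change map $\WZ(R(N_f))\hookrightarrow\WZ(R(N_f)_E)=\WZ(\Prod_\tau(N_f)_\tau)$ is injective, so it suffices to compare the two sides after pulling back to the Galois closure $E$.

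Using (\ref{NRf}) and its proof, $R(f)_E$ identifies with the product embedding $\Prod_\tau f_\tau$. The cycle-level operations entering the definition of $\sigma_f$ --- flat pullback to $X\times\Gm$, Zariski closure inside the open deformation space $D_f$, and flat pullback to the principal divisor $N_f\subset D_f$, following \cite[\S51]{EKM} --- all commute with flat base change. Hence the pullback to $\WZ(\Prod_\tau(N_f)_\tau)$ of $\sigma_{R(f)}(\WR(\alpha))$ equals $\sigma_{\Prod_\tau f_\tau}(\Prod_\tau\alpha_\tau)$, whereas that of $\WR(\sigma_f(\alpha))$ equals $\Prod_\tau\sigma_{f_\tau}(\alpha_\tau)$ (by definition of $\WR$ together with the flat-base-change compatibility of $\sigma$). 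The lemma thus reduces to the external-product identity
$$
\sigma_{\Prod_\tau f_\tau}\Bigl(\Prod_\tau\alpha_\tau\Bigr)\;=\;\Prod_\tau\sigma_{f_\tau}(\alpha_\tau).
$$

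To verify this identity it suffices, by induction, to treat the two-factor case. There one first establishes the canonical isomorphism $D_{f_1\times f_2}\cong D_{f_1}\times_{\A^1}D_{f_2}$ over the deformation parameter, via the extended Rees algebra and the identity $J^n=\sum_{a+b=n}p_1^*(I_1^a)p_2^*(I_2^b)$ for the product ideal $J=p_1^*I_1+p_2^*I_2$. The three operations defining $\sigma$ all respect this fiber-product decomposition: flat pullback and Zariski closure distribute over external products, and the Cartier divisor $\{t=0\}$ cutting out $N_f$ is pulled back from $\A^1$ and so intersects the fiber product componentwise. The main obstacle is precisely this last step, i.e.\ the cycle-theoretic bookkeeping for multiplicities when one takes the Zariski closure of an external product inside the fiber-product deformation space and then pulls back to the special fiber; the Rees-algebra computation itself is elementary, but the verification that the ``obvious'' componentwise answer is what the formulas of \cite{EKM} produce is where the content lies.
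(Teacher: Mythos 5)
Your reduction coincides with the paper's own proof: treat the cycle-level square (the Chow-level one then follows), use the injectivity of the base-change map $\WZ(R(N_f))\hookrightarrow\WZ(R(N_f)_E)$ to compare the two composites over the normal closure $E$, identify $R(f)_E$ with $\Prod_\tau f_\tau$, and use the compatibility of $\sigma$ with flat base change to rewrite both sides as external products over $\tau$. At that point the paper finishes in one line, citing \cite[Proposition 51.5]{EKM} for the identity $\sigma_{f_\tau}(\alpha_\tau)=\sigma_f(\alpha)_\tau$ and writing the base-changed bottom map as $\prod_\tau\sigma_{f_\tau}$; you instead isolate the external-product identity $\sigma_{\Prod_\tau f_\tau}(\Prod_\tau\alpha_\tau)=\Prod_\tau\sigma_{f_\tau}(\alpha_\tau)$ and set out to reprove it from scratch via the extended Rees algebra and the decomposition $D_{f_1\times f_2}\cong D_{f_1}\times_{\A^1}D_{f_2}$. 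That is indeed the standard route to this compatibility, and your outline is sound: the identity for $J^n$ is correct, and flatness of the closures over $\A^1$ is what forces the special fibre to decompose componentwise. However, your final paragraph explicitly declines to carry out the multiplicity bookkeeping, which you yourself identify as ``where the content lies''; as a standalone proof the proposal is therefore incomplete at its one nontrivial point. The gap is harmless in practice, since the external-product and flat base-change compatibilities of the deformation homomorphism are already established in \cite[\S51]{EKM} --- the same source the rest of the argument rests on --- so a citation replaces the unfinished computation; but then your route collapses into the paper's. One minor slip: the third operation in the definition of $\sigma_f$ is not a flat pull-back to $N_f\subset D_f$ (that closed embedding is not flat) but intersection with the principal Cartier divisor $\{t=0\}$, i.e.\ the boundary map of the localization sequence for $X\times\Gm\subset D_f$; this does not affect the argument.
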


\begin{proof}
We only need to treat the second square: its commutativity implies commutativity of the first one.

Since the base change homomorphism $\WZ(R(N_f))\to\WZ(R(N_f)_E)$ is injective,
we may perform the base change $E/F$ in the lower line of the $\WZ$-square.
Then it becomes
$$
\begin{CD}
\WZ(N_f) @<{\sigma_f}<< \WZ(X)\\
@V{\beta\mapsto\prod_\tau\beta_\tau}VV @VV{\alpha\mapsto\prod_\tau\alpha_\tau}V\\
\WZ(\prod_\tau N_{f_\tau}) @<{\prod_\tau\sigma_{f_\tau}}<< \WZ(\prod_\tau X_\tau)
\end{CD}
$$
and is commutative because
$\sigma_{f_\tau}(\alpha_\tau)=\sigma_f(\alpha)_\tau$
for any cycle $\alpha\in\WZ(X)$ and $F$-embedding $\tau\colon L\hookrightarrow E$ according to \cite[Proposition 51.5]{EKM}.
\end{proof}

\begin{rem}
By \cite[Proposition 4.4(interior product)]{MR1809664}, the Weil transfer map $$\cat{R}\colon\CH(X)\to\CH(R(X))$$
is {\em multiplicative}:
\begin{equation}
\label{formula}
\cat{R}(\alpha\cdot\beta)=\cat{R}(\alpha)\cdot\cat{R}(\beta)
\end{equation}
for any $\alpha,\beta\in\CH(X)$.
This follows from \cite[Proposition 4.4(exterior product)]{MR1809664} giving the similar formula
$\cat{R}(\alpha\times\beta)=\cat{R}(\alpha)\times\cat{R}(\beta)$
for the external product, the formula $\alpha\cdot\beta=\delta^*(\alpha\times\beta)$
(see \cite[(56.1)]{EKM})
expressing the internal product as
the pullback of the external one with respect to the diagonal morphism $\delta\colon X\to X\times X$, and
Proposition \ref{prop}.
Therefore the new proof of  Proposition \ref{prop}, given here, provides a new proof for (\ref{formula}) as well.
\end{rem}

\medskip
\noindent
{\sc Acknowledgements.}
We thank Stefan Gille for pointing out the problem and checking through the solution.


\end{document}